    \providecommand{\showkeys}{}
\def\blfootnote{\xdef\@thefnmark{}\@footnotetext}
\newcommand{\gitshash}{NA}
\newcommand{\gitauthsdate}{NA}
  \newcommand{\fig}[2]{
    \IfFileExists{#1.pdf_tex}{
      \def\svgwidth{#2}\input{#1.pdf_tex}
    }{
      \frame{Missing figure ``#1.pdf\_tex''}
      \message{LaTeX Warning: Missing figure ``#1.pdf\_tex'' on input line \the\inputlineno}
    }
  }
\newcommand{\dimension}{N}
\newcommand{\interior}{\operatorname{int}}
\newcommand{\dx}{\;dx}
\newcommand{\diff}[1]{\;d{#1}}
\newcommand{\pth}[1]{\left(#1\right)}
\newcommand{\set}[1]{{\left\{#1\right\}}}
\newcommand{\ang}[1]{{\left\langle#1\right\rangle}}
\newcommand{\norm}[1]{\left\|#1\right\|}
\newcommand{\cl}[1]{\overline{#1}}	
\newcommand{\e}{\ensuremath{\varepsilon}}
\newcommand{\Q}{\ensuremath{\mathbb{Q}}}
\newcommand{\R}{\ensuremath{\mathbb{R}}}
\newcommand{\Rd}{\ensuremath{{\mathbb{R}^{\dimension}}}}
\newcommand{\Rn}{\Rd}
\newcommand{\Z}{\ensuremath{\mathbb{Z}}}
\newcommand{\N}{\ensuremath{\mathbb{N}}}
\newcommand{\T}{\ensuremath{\mathbb{T}}}
\definecolor{grey}{rgb}{0.6,0.6,0.6}
\numberwithin{equation}{section}
\newtheorem{theorem}{Theorem}[section]
\newtheorem{lemma}[theorem]{Lemma}
\newtheorem{proposition}[theorem]{Proposition}
\theoremstyle{definition}
\newtheoremstyle{remarkstyle}
  {3pt}
  {3pt}
  {\small}
  {}
  {\bfseries}
  {.}
  { }
  {}
\theoremstyle{remarkstyle}
\title[Average free boundary velocity in the HS problem]{An efficient numerical method for estimating the average free boundary velocity in an inhomogeneous Hele-Shaw problem}
\author[I. Palupi]{Irma Palupi}
\address[I. Palupi] {Graduate School of Natural Science and Technology, Kanazawa University, Kakuma, Kanazawa, 920-1192, Japan}
\email{irmapalupi@telkomuniversity.ac.id}
\author[N. Po\v{z}\'{a}r]{Norbert Po\v{z}\'{a}r}
\address[N. Po\v{z}\'ar]{Faculty of Mathematics and Physics, Institute of Science and Engineering, Kanazawa University, Kakuma, Kanazawa, 920-1192, Japan}
\email{npozar@se.kanazawa-u.ac.jp }
\thanks{}
\date{\today\ (git: \gitauthsdate, \gitshash)}
\keywords{Hele-Shaw problem, numerical homogenization}
\subjclass[2010]{}
\begin{document}
\begin{abstract}
We develop a numerical method to estimate the average speed of the free boundary in a Hele-Shaw problem with periodic coefficients in both space and time. We test the accuracy of the method and present a few examples. We show numerical evidence of flat parts (facets) on the free boundary in the homogenization limit.
\end{abstract}

\maketitle

\section{Introduction}

Let $K \subset \Rn$, $N \in \N$, be a nonempty closed set with a smooth boundary and let $\Omega_0 \supset K$ be an open set with a smooth boundary. In what we call the Hele-Shaw problem, we are to find the family of open sets $\set{\Omega_t}_{t \geq 0}$ that evolves with the outer normal velocity
\begin{align}
  \label{HS}
      V = g(x, t) |Du(x, t)| \qquad x \in \partial \Omega_t,
\end{align}
where $u = u(x,t)$ is at each time $t \geq 0$ the solution of the Laplace equation
\begin{align}
  \label{u-problem}
  \left\{\begin{aligned}
  -\Delta u(\cdot, t) &= 0 && \text{in } \Omega_t \setminus K \\
  u(\cdot, t) &= 0 && \text{on } \partial \Omega_t\\
  u(\cdot, t) &=1 && \text{on } \partial K,
  \end{aligned}\right.
\end{align}
and $g = g(x,t)$ is a given positive continuous function. Here $Du = (u_{x_1}, \ldots, u_{x_N})$ is the spatial gradient and $\Delta u = u_{x_1x_1} + \cdots + u_{x_Nx_N}$ is the Laplacian.

In two dimensions, the Hele-Shaw problem is a popular model of a flow of an incompressible fluid in between two close parallel plates known as the Hele-Shaw cell, see \cite{ST,Richardson}. It naturally generalizes to an arbitrary dimension and we refer to it by the same name in this paper. In three dimensions, in particular, it is a model of a flow of an incompressible fluid through a porous medium. The quantities have the following meaning: $u(x,t)$ is the pressure of the flowing fluid at a given point $x$ and a given time $t$, which fills the domain $\Omega_t$ at time $t$. $\partial \Omega_t$ is the interface between the fluid and the air, and it is a \emph{free boundary}. We neglect the surface tension effects and therefore we can normalize the pressure to $u = 0$ on $\partial \Omega_t$. Keeping the pressure at a prescribed positive value $1$ on $\partial K$ by injecting the fluid through $\partial K$, the fluid is advancing into the air. Darcy's law states that the velocity of the fluid is proportional to $-Du$. Since the free boundary is a level set of $u$, its normal velocity is proportional to $|Du|$. One can think about $\frac{1}{g(x,t)}$ as the depth of holes that the liquid must fill at the free boundary while it is advancing. We allow the depth to change with time.

\medskip

In recent years, there has been a lot of interest to understand the averaging behavior in evolutionary problems with oscillating coefficients in both space and time \cite{Schwab,Lin,JST}. In particular, the second author considered the homogenization of the Hele-Shaw problem \eqref{HS} in \cite{Pozar_ARMA}. It was also observed that non-periodic, fractal like variations in the flow lead to anomalous diffusion in Stefan and Hele-Shaw problems \cite{Voller}.

The goal of the homogenization approach is to understand how $g = g(x,t)$ influences the \emph{average} free boundary velocity. Clearly, we can observe an averaging behavior only if $g$ has a special structure, for example if $g$ is periodic. We investigate this in the \emph{homogenization limit}, that is, when the scale of these oscillations $\e \to 0$. Therefore we will assume that $g$ is periodic in both $x$ and $t$ with period $1$, that is
\begin{align*}
  g(x,t) = g(x + \xi, t + \tau) \quad \text{for all } \xi \in \Z^N, \tau \in \Z,
\end{align*}
and for scale $\e > 0$ we introduce the rescaled $g^\e(x,t) := g(\frac x\e, \frac t\e)$.
Note that in general scaling $g(\e^\alpha x, \e^\beta t)$ is possible, but it leads to a simpler behavior than this critical scaling $\alpha = \beta$, see for example \cite{Piccinini1}.
Keeping all other parameters fixed, for every given $\e > 0$ we get a solution $\set{\Omega^\e_t}_{t \geq 0}$, $u^\e$ of \eqref{HS} with $g = g^\e$. The goal is to identify the homogenization limit $\e \to 0$ of these solutions.

We shall denote
\begin{align*}
\Omega = \bigcup_{t \geq 0} \Omega_t \times \set t \quad \text{and}\quad \Omega^\e := \bigcup_{t \geq 0} \Omega_t^\e \times \set t.
\end{align*}
In \cite{Pozar_ARMA}, under certain regularity assumptions on the data $K$, $\Omega_0$ and $g$, it was proved that there exist limits $\set{\Omega_t}_{t\geq 0}$ and $u$ such that $u^\e \to u$ in the sense of half-relaxed limits and $\partial \Omega^\e \to \partial \Omega$ in Hausdorff distance. Furthermore, the pair $(\Omega, u)$ is the unique solution of the \emph{homogenized problem} in which $\Omega$ evolves with the normal free boundary velocity
\begin{align}
  \label{homogenized-HS}
  V = r(Du) \qquad x \in \partial \Omega_t,
\end{align}
and $u$ is again the solution of \eqref{u-problem}, where $r:\Rn \to \R$ is a nonnegative function that depends only on $g$.

A straightforward modification of the arguments in \cite{Pozar_ARMA} shows the same homogenization result if the Dirichlet boundary condition on $\partial K$ in \eqref{u-problem} is replaced by a Neumann boundary condition $\frac{\partial u}{\partial \nu}(\cdot, t) = 1$, where $\nu$ is the inner unit normal to $\partial K$.

However, there does not seem to be any explicit formula for $r(q)$, and it is not even known whether $r$ is continuous in general. It is only known that $r^*(a_1 q) \leq r_*(a_2 q)$ for any $0 < a_1 < a_2$ and any $q \in \Rn \setminus \set0$, where $r^*$ and $r_*$ denote the upper and lower semicontinuous envelopes of $r$, respectively. See \cite{Pozar_ARMA} for more details.
Formal calculations indicate that $r(q)$ is in general only $\frac 12$-H\"older continuous if $g$ is smooth and $r(q)$ might be discontinuous if $g$ is only H\"older \cite{KM_drops,Feldman}.
Our goal is to estimate $r(q)$ \emph{numerically}. We are in particular interested whether the homogenized problem has solutions whose free boundary develops flat parts (\emph{facets}). We propose an efficient numerical method to estimate $r(q)$ in dimension $N = 2$ and present some numerical results. Our method naturally generalizes to any dimension but we discuss only the two dimensional case for simplicity.

\subsection*{Outline} In the next section, we first discuss the one-dimensional setting to give a motivation for our numerical method to estimate $r(q)$ in two dimensions, which is then introduced in Section~\ref{sec:two-dimensions}. In Section~\ref{sec:numerical-results}, we present a few results of the numerical computation.

\section{The Hele-Shaw problem in one dimension}
\label{sec:one-dimension}

To motivate our work, let us briefly discuss the behavior of the Hele-Shaw problem \eqref{HS}--\eqref{u-problem} in one dimension.
Let
$K = (-\infty, 0]$ and $\Omega_0 = (-\infty, y_0)$,
and we use the boundary condition $u^\e_x(0, t) = q$ on $\partial K = \set{0}$ for some $q < 0$ in \eqref{u-problem}.
Then $\Omega^\e_t = (-\infty, y^\e(t))$ for some $y^\e > 0$.
The solution of Laplace's equation is $u^\e(x, t) = q(x - y^\e(t))$ in this case.
The free boundary velocity equation for $\Omega^\e$ simplifies to
\begin{align}
  \label{ode}
  \left\{\begin{aligned}
  (y^\e)'(t) &= g(\tfrac {y^\e(t)}\e, \tfrac t\e) |q|, \quad t > 0,\\
  y^\e(0) &= y_0,
  \end{aligned}\right.
\end{align}
which is a simple initial-value problem for an ordinary differential equation (ODE).
It is known, see \cite{Piccinini1,IM}, that $y^\e$ converges locally uniformly as $\e \to 0+$ to the
solution $y$ of the ODE
\begin{align*}
  \left\{\begin{aligned}
  y'(t) &= r(q), && t > 0,\\
  y(0) &= y_0,
  \end{aligned}\right.
\end{align*}
where $r: \R \to \R$ depends only on $g$. This equation has the unique solution $y(t) =
y_0 + t r(q)$. We can therefore \emph{estimate $r(q)$ numerically} by solving
\eqref{ode} for a small $\e > 0$ and finding
\begin{align*}
  r(q) = y(1) - y_0 \approx y^\e(1) - y_0.
\end{align*}
By a scaling argument, this can be shown equivalent to solving \eqref{ode} with $\e = 1$ for a large time $T \gg1$  and then finding
\begin{align*}
r(q) = \frac{y(T) - y_0}T \approx \frac{y^1(T) - y_0}T.
\end{align*}
Since \eqref{ode} can be efficiently solved numerically, we can estimate $r(q)$ rather easily.

The actual form of $r(q)$ is known only in certain cases \cite{Piccinini1}:
\begin{itemize}
  \item $g(x,t) = g(t)$: if $g$ is a $1$-periodic function of $t$ only, then $r(q) = \ang{g} |q|$, where $\ang{g} =
    \int_0^1 g(t) \diff t$ is the average of $g$.
  \item $g(x,t) = g(x)$: if $g$ is a $1$-periodic function of $x$ only, then $r(q) = \frac 1{\ang{\frac 1{g}}} |q|$,
    where $\ang{\frac 1g} =
    \int_0^1 \frac 1{g(x)} \diff x$ is the average of $\frac 1g$.
\end{itemize}

If $g$ depends on both $x$ and $t$ nontrivially, the explicit form of $r(q)$ is not known and in
fact it can be very complicated, see Figure~\ref{fig:rq} for an example. The number $r(q)$ is related to Poincar\'e's rotation number of a dynamical system corresponding to the ODE \eqref{ode}, see for instance \cite{JST} and the references therein.
\begin{figure}[t]
  \centering
  \includegraphics[width=0.7\textwidth]{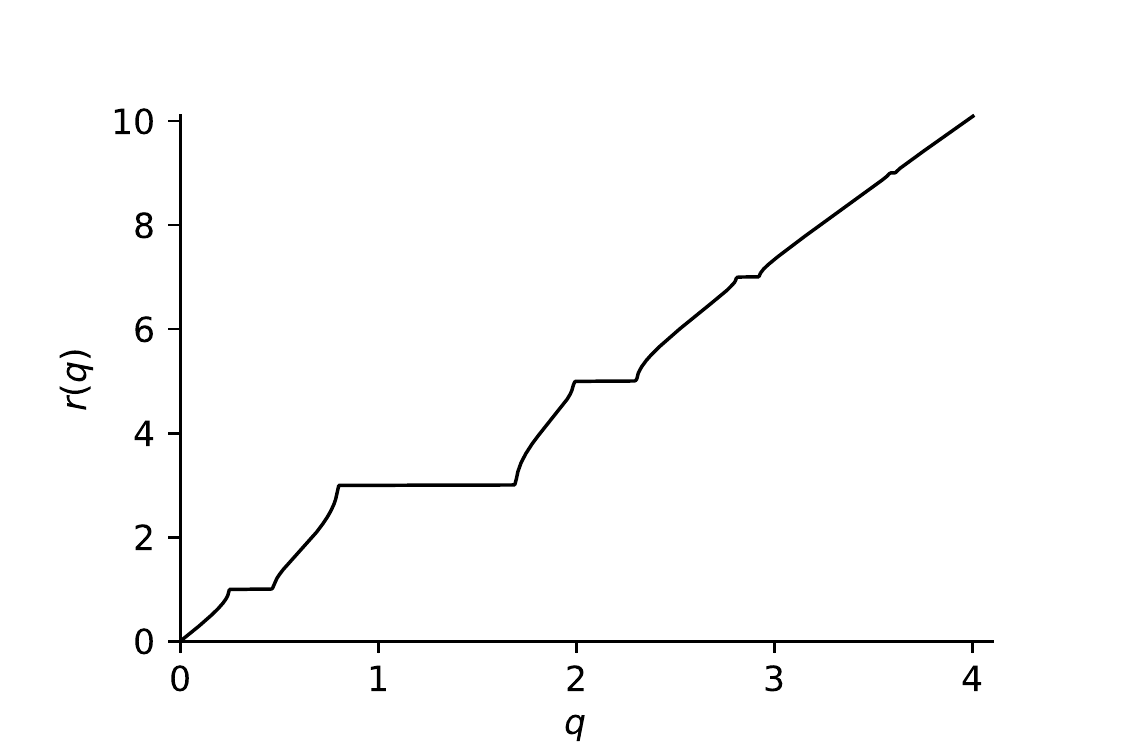}
  \caption{Sample $r(q)$ in one dimension for $g(x,t) = \sin(2\pi(x + t)) + \sin(2\pi(x + 3t)) + 3$. Note the pinning intervals at speeds $2k - 1$ for $k = 1, 2, \ldots, 5$. The graph was computed numerically using the method in Section~\ref{sec:one-dimension}.}
  \label{fig:rq}
\end{figure}

Nonetheless, we can still find the value of $r(q)$ at least for particular $q$. Particularly interesting is the existence of intervals of constant velocity, which we call \emph{pinning intervals}. See also \cite{KM_drops} for a related problem on a droplet motion.

\begin{proposition}
\label{pr:pinning-interval}
Suppose that $g(x,t) = f(x - t)$ where $f = f(x)$ is a positive periodic Lipschitz continuous function. Then $r(q) = 1$ for $q \in [-\frac{1}{\min f}, - \frac{1}{\max f}]$.
\end{proposition}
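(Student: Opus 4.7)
The plan is to reduce to a simple autonomous ODE via the traveling-wave substitution $z(t) = y(t) - t$, and then use the existence of fixed points of the resulting ODE to show that $z$ is bounded, which forces the asymptotic slope of $y$ to equal $1$.

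More concretely, I fix $\e = 1$ and use the formula $r(q) = \lim_{T \to \infty} (y^1(T) - y_0)/T$ from Section~\ref{sec:one-dimension}. With $g(x,t) = f(x-t)$, the ODE \eqref{ode} becomes
\begin{align*}
  y'(t) = f(y(t) - t)\,|q|, \qquad y(0) = y_0.
\end{align*}
Setting $z(t) := y(t) - t$ gives the autonomous ODE
\begin{align*}
  z'(t) = |q|\bigl(f(z(t)) - c\bigr), \qquad c := \tfrac{1}{|q|}.
\end{align*}
Since $q \in [-\frac{1}{\min f}, -\frac{1}{\max f}]$, we have $c \in [\min f, \max f]$, so by the intermediate value theorem the set $F := \{z \in \R : f(z) = c\}$ is nonempty. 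By periodicity of $f$, $F$ is unbounded above and below.

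Next I show that $z(t)$ is bounded. The right-hand side $|q|(f(z) - c)$ is Lipschitz in $z$ (since $f$ is Lipschitz), so solutions are unique, and in particular the trajectory $z(t)$ cannot cross any point of $F$ (such a point is an equilibrium). Hence, letting $z^+ := \inf\{w \in F : w \geq z(0)\}$ and $z^- := \sup\{w \in F : w \leq z(0)\}$, both finite by the unboundedness of $F$, we have $z(t) \in [z^-, z^+]$ for all $t \geq 0$ (if $z(0) \in F$ this is trivial; otherwise $z$ is strictly monotone and converges to the relevant endpoint). Consequently
\begin{align*}
  \frac{y^1(T) - y_0}{T} = \frac{z(T) - y_0 + T}{T} \xrightarrow[T \to \infty]{} 1,
\end{align*}
which yields $r(q) = 1$.

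The main conceptual step is recognizing the traveling-wave substitution that converts the nonautonomous ODE into an autonomous one with the correct fixed-point structure; after that, the argument is elementary. The only mild subtlety is the boundary cases $c = \min f$ or $c = \max f$, where the fixed points may be one-sided and $z$ is monotone for all time, but the same argument (bounded monotone convergence to the nearest point of $F$) still applies, so no separate treatment is needed.
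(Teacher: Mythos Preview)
Your proof is correct and is essentially the same argument as the paper's: the paper exhibits the explicit traveling-wave solutions $y^\e(t) = \e x_0 + t$ for $x_0$ with $f(x_0) = 1/|q|$ and uses the comparison principle to trap any solution between two consecutive ones, while your substitution $z = y - t$ simply rewrites these as the equilibria of the resulting autonomous ODE and uses uniqueness in the same way. The only cosmetic difference is that the paper phrases the barrier argument for general $\e$, whereas you work at $\e = 1$ and pass to the long-time average.
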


\begin{proof}
Let $L > 0$ be a period of $f$. Fix $q \in [-\frac{1}{\min f}, - \frac{1}{\max f}]$. Since $\frac{1}{|q|} \in [\min f, \max f]$, there exists $\xi \in \R$ such that $f(x_0) = \frac{1}{|q|}$ for all $x_0 \in \xi + L\Z$. But $y^\e(t) = \e x_0 + t$ is then a solution of \eqref{ode} for any $x_0 \in \xi + L\Z$ and $\e >0$. By uniqueness of \eqref{ode} (comparison principle), we conclude that $\frac{y^\e(T) - y^\e(0)}T = 1$ for any $\e >0$ and therefore $r(q) = 1$.
\end{proof}

In higher dimensions, if $g$ is time-independent, then it was shown in \cite{R1,KM_ARMA} that the solutions of \eqref{HS}
converge to the solutions of the homogenized problem \eqref{homogenized-HS} with $r(q) = \frac 1{\ang{\frac
1{g}}} |q|$ as in the one-dimensional case. A simple scaling argument shows that also $g = g(t)$ homogenizes to $r(q) = \ang{g} |q|$.

\section{Estimating the homogenized velocity in two dimensions}
\label{sec:two-dimensions}

In this section we propose a numerical method to estimate the homogenized velocity $r = r(q)$ in \eqref{homogenized-HS}, with a focus on dimension $N = 2$.
In contrast to the one-dimensional situation, the shape of the free boundary $\partial \Omega^\e_t$ is in general not flat in two dimensions and therefore the solution of \eqref{u-problem} is not a linear function anymore. We therefore have to solve the full problem to estimate $r(q)$. We first observe that for a given $q \in \Rn$ the moving plane
\begin{align*}
  P_q(x,t) &:= |q| \pth{r(q) t - x \cdot \frac q{|q|}} \qquad x \in \Rn, t \in \R,\\
  \Omega_q&:= \set{(x,t) : P_q(x, t) > 0} = \set{(x, t) :x \cdot \frac q{|q|} < r(q) t}
\end{align*}
satisfies the homogenized free boundary velocity law \eqref{homogenized-HS} with $u = P_{q,r}$ and $\Omega = \Omega_q$.

Let us suppose that $q = (q_1, 0)$ for some $q_1 < 0$. We consider the Hele-Shaw problem with $K := (-\infty, 0] \times \R \subset \Omega_0 := (-\infty, L_0) \times \R \subset \R^2$ for some fixed $L_0 > 0$, with Neumann boundary condition $u_{x_1}(0, x_2) = q_1$ for all $x_2 \in \R$. If we denote the canonical basis of $\R^2$ by $\{e_1,e_2\}$, clearly $(\Omega, u) = (\Omega_q +L_0 (e_1, 0), P_q(\cdot - L_0 e_1, \cdot))$ is a solution of the homogenized problem \eqref{homogenized-HS}--\eqref{u-problem} with the above initial and boundary data.
Let $\Omega^\e$, $u^\e$ be the solution of the $\e$-problem with the same boundary and initial data. By \cite{Pozar_ARMA}, we know that $\partial \Omega^\e \to \partial \Omega_q + L_0 (e_1, 0)$ in Hausdorff distance.
Let us fix $L_1 > L_0$ and define the first time the free boundary of the solution of the $\e$-problem touches the set $\set{x_1 = L_1}$,
\begin{align*}T_\e := \sup\set{t > 0: \Omega^\e_t \cap \set{x_1 = L_1} = \emptyset}.\end{align*}
By the convergence in the Hausdorff distance, we see that $T_\e \to \frac{L_1 - L_0}{r(q)}$ as $\e \to 0$.
This allows us to estimate $r(q)$ by choosing $0 < \e \ll 1$ and using
\begin{align*}
  r(q) \approx \frac{L_1 - L_0}{T_\e}.
\end{align*}

We will find $T_\e$ numerically by solving the problem on a bounded domain.
To this end, we observe that if $\e = \frac 1{\omega}$ for some $\omega \in \N$ sufficiently large, the uniqueness of solutions of the $\e$-problem implies that $\Omega^\e$, $u^\e$ are $1$-periodic in the $x_2$-direction, that is, $\Omega^\e + (e_2, 0) = \Omega^\e$, $u^\e(x + e_2, t) = u^\e(x,t)$.

Therefore we introduce the numerical domain $U = (0,1) \times \T$, where $\T = \R / \Z$ is the one-dimensional torus, and solve the Hele-Shaw problem on $U$ with boundary conditions
\begin{align*}
  \Omega^\e + (e_2,0) &= \Omega^\e,\\
  u_{x_1}(0, x_2, t) &= q_1 && 0 \leq x_2 \leq 1, t \geq 0,\\
  u(x_1, x_2 + 1, t) &= u(x_1, x_2, t) && x \in \Omega^\e_t, t \geq 0,
\end{align*}
see Figure~\ref{fig:domain}.
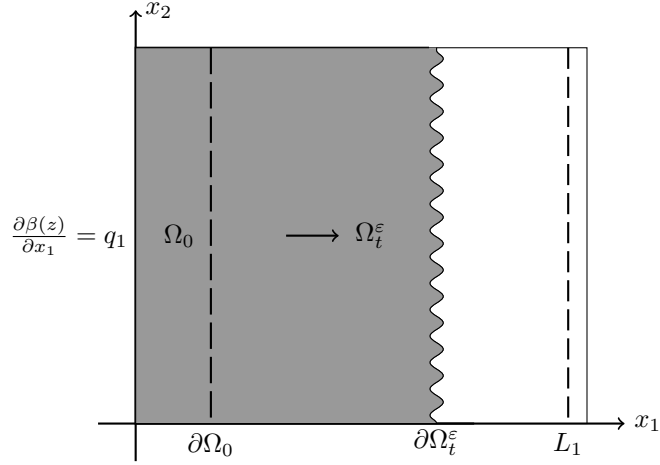
\begin{figure}[t]
	\centering
	\begin{tikzpicture}
		\draw[->, thick] (0,-0.5) -- (0,5.5) node[anchor=west] {$x_2$};
		\draw[->, thick] (-0.5,0) -- (6.5,0) node[anchor=west] {$x_1$};
		\draw (0,0) rectangle (6,5) ;
		\draw[name path=A, thick] (3.9,5)--(0,5)--(0,0)--(4.5,0);
		\draw[name path=B, decorate, decoration=snake] (4,0) -- (4,5);
		\draw[fill=grey, intersection segments= {of=A and B}];
		\draw [dash pattern = on8pt off3pt, thick] (5.75,5) -- (5.75, 0) node[below] {$L_1$};
		\draw [dash pattern = on10pt off3pt, thick] (1,5) -- (1,0) node[below] {$\partial \Omega_0$};
		\draw [](0.25,2.5) node[anchor=west] {$\Omega_0$};
		\draw[] (4,-0.25) node[] {$\partial \Omega^\e_t$};
		\draw [](-1.8,2.5) node[anchor=west] {$\frac{\partial \beta(z)}{\partial x_1} = q_1$} ;
		\draw [->, thick] (2,2.5) -- (2.7,2.5);
		\draw[] (1.5,2.5) node[text width = 3 cm, align = center, anchor = west] {$\Omega^\e_t$};
	\end{tikzpicture}
	\caption{Neumann problem \eqref{elliptic-problem} in 2D case to estimate the value of $r(q)$.}
  \label{fig:domain}
\end{figure}

There are direct methods to solve the Hele-Shaw problem, however, for simplicity and efficiency, we use the fact that the solution of the Hele-Shaw problem can be in the limit $\lambda \to 0$ approximated \cite{Lacey,LR} by the solution of the Stefan problem
\begin{align}
\label{u-Stefan}
\left\{\begin{aligned}
\lambda u_t - \Delta u &= 0 &&\text{in } \Omega,\\
                          V &= g^\e |Du| && \text{on $\partial\Omega$},
  \end{aligned}\right.
\end{align}
with initial condition $u(\cdot, 0) = u_0$, where $u_0$ is the $1$-periodic-in-$x_2$ solution of
\begin{align*}
\left\{\begin{aligned}
-\Delta u_0 &= 0 && \text{in }\Omega_0 \setminus K\\
u_0 &= 0 && \text{on } \partial \Omega_0,\\
\partial_{x_2} u_0 &= q_1 && \text{on } \set{x_2 = 0}.
\end{aligned}\right.
\end{align*}

This problem can be rewritten in the \emph{enthalpy} formulation by introducing $\beta(s) := \max(s, 0)$ and solving formally for $z: \Rn \times \R \to \R$ the solution of
\begin{align}
\label{z-Stefan}
\left\{\begin{aligned}
\lambda z_t - \Delta \beta(z) &= - \pth{\frac{\partial}{\partial t} \frac{1}{g^\e}} \chi_{\interior \set{z < 0}} && \text{in } U \times (0, \infty),\\
\frac{\partial \beta(z)}{\partial x_1}(0, x_2, t) &= q_1 && \text{for } x_2 \in [0,1), t > 0,\\
z &\text{\quad$1$-periodic in $x_2$,}\\
z(\cdot, 0) &= u_0 \chi_{\Omega_0} - \frac{1}{\lambda g^\e(\cdot, 0)} \chi_{\Omega_0^c} && \text{in } U.
\end{aligned}\right.
\end{align}
Here $\chi$ is the indicator function of a given set and $\interior \set{z < 0}$ is the interior of the set $\set{z < 0}$. The solution is understood in the sense of distributions. We can recover $u$ as $\beta(z)$ and $\Omega$ as $\set{z > 0}$.

If $g^\e = g^\e(x)$, the well-posedness of problem \eqref{z-Stefan} is well known from the theory of variational obstacle problems, see for example \cite{FK,R1}, and $u = \beta(z)$ is continuous \cite{CF}. We do not address the well-posedness when $g^\e = g^\e(x,t)$, but show at least
that \eqref{z-Stefan} is equivalent to \eqref{u-Stefan} with the same boundary data for classical solutions.

Let us therefore assume that there exists a differentiable function $s: K^c \to [0, \infty)$, $D s \neq 0$ for $t > 0$, such that $z \in C^2(Q_+) \cap C^1(\cl{Q_+}) \cap C^1(Q_-) \cap C(\cl{Q_-})$, $z > 0$ in $Q_+$, $z(x, s(x)) = 0$ if $s(x) > 0$, $z < 0$ in $Q_-$ where
\begin{align*}Q_\pm := \set{(x,t): x \in K^c, \ t > 0, \pm(t - s(x)) > 0}.\end{align*}
By $z \in C(\cl{Q_-})$ we understand the $z$ has a limit denoted as $z(x, s(x)-)$ as $(y,t) \to (x, s(x))$ along sequences with $t < s(y)$.

Assume that $z$ satisfies \eqref{z-Stefan} in the sense of distributions. Let us take a test function $\varphi \in C^\infty_c(K^c \times (0, \infty))$. We have
\begin{align*}
0&=\int_{K^c} \int_0^\infty \lambda z \varphi_t + \beta(z) \Delta \varphi - \pth{\frac{\partial}{\partial t} \frac{1}{g^\e}} \chi_{\interior \set{z < 0}} \varphi \dx \diff{t} =\\
& = \int_{Q^+} \beta(z) (\lambda\varphi_t + \Delta \varphi) \dx \diff{t} + \int_{Q^-} \lambda z \varphi_t - \pth{\frac{\partial}{\partial t} \frac{1}{g^\e}} \varphi \dx \diff{t} := I_+ + I_-.
\end{align*}
Integration by parts on the individual terms $I_\pm$ yields
\begin{align*}
I_+ &= \int_{K^c} \int_{s(x)}^\infty \lambda \beta(z) \varphi_t \diff t \dx + \int_0^\infty \int_{\set{s(x) < t}} \beta(z) \Delta \varphi \dx \diff t\\
&= -\int_{Q^+} (\lambda \partial_t \beta(z) - \Delta \beta(z)) \varphi \dx \diff t - \int_0^\infty \int_{\set{s(x) = t}} D\beta(z) \cdot \frac{Ds}{|Ds|} \varphi \diff{\mathcal H^{n-1}} \diff t,
\end{align*}
where we used that $z(x, s(x)) = 0$ and that the unit outer normal vector to $\set{x: s(x) < t}$ is $\frac{Ds}{|Ds|}$, and
\begin{align*}
I_- &= \int_{Q_-} \pth{-\lambda z_t  -  \pth{\frac{\partial}{\partial t} \frac{1}{g^\e}}} \varphi \dx \diff t
+ \int_{K^c} \lambda z(x, s(x)-) \varphi(x, s(x)) \dx.
\end{align*}
From this we immediately have that $u = \beta(z)$ satisfies $\lambda u_t - \Delta u = 0$ in $Q_+$, and $z = -\frac 1{\lambda g^\e}$ in $Q_-$. In particular, $z(x, s(x)-) = -\frac 1{\lambda g^\e(x, s(x))}$. The coarea formula yields
\begin{align*}
\int_0^\infty \int_{\set{s(x) = t}} D \beta(z) \cdot \frac{Ds}{|Ds|} \varphi \diff{\mathcal{H}^{n-1}} \diff t = \int_{K^c} D \beta(z)(x,s(x)) \cdot Ds(x,s(x))  \varphi(x,s(x)) \dx.
\end{align*}
Therefore
\begin{align*}
\int_{K^c} \pth{-\frac{\lambda}{\lambda g^\e(x, s(x))} -D u(x, s(x)) \cdot D s(x) }\varphi(x,s(x)) \dx = 0.
\end{align*}
But $V = \frac{1}{|Ds|}$ and $- D u \cdot Ds = |Du||Ds|$ and therefore we conclude that
\begin{align*}
V =  g^\e|Du|.
\end{align*}

A numerical solution of problem \eqref{z-Stefan} can be found efficiently by the method introduced by Berger, Br\'ezis and Rogers \cite{BBR}, in the form further studied by Murakawa \cite{Murakawa}. We refer to this scheme as the BBR scheme.
Choosing a time step $\tau > 0$, we iteratively find the sequences $\set{u^k}_{k \geq 1}$, $\set{z^k}_{k \geq 0}$ of solutions of
\begin{subequations}
\begin{align}
\label{u-update}
  &\left\{\begin{aligned}
  \lambda \mu^{k-1} u^k - \tau \Delta u^k &= \lambda \mu^{k-1} \beta(z^{k-1}) &&\text{in } U,\\
  \frac{\partial u^k}{\partial x_1}(0, \cdot) &= q_1,\\
  u^k(1, \cdot) &= 0,\\
  u^k & \qquad \text{$1$-periodic in $x_2$},
\end{aligned}\right.\\
\label{z-update}
  z^k &= z^{k-1} + \mu^{k-1} (u^k - \beta(z^{k-1})) - \frac \tau\lambda \pth{\frac{\partial}{\partial t} \frac{1}{g^\e}}(\cdot, t_{k-\frac 12}) \chi_{\interior \set{z^{k-1} < 0}},\\
\label{mu-update}
  \mu^k &= \frac 1{\delta + \beta'(z^k)},
\end{align}
\end{subequations}
for $k = 1, 2, \ldots$, with $z^0 := z(\cdot, 0)$.
Here $\delta > 0$ is a chosen regularization parameter that we discuss below, and we define
\begin{align*}
  \beta'(s) :=
  \begin{cases}
    1, & s >0,\\
    0, & s \leq 0.
  \end{cases}
\end{align*}

Note that we add the source $- \frac1\lambda\pth{\frac{\partial}{\partial t} \frac{1}{g^\e}}(\cdot, t_{k-\frac 12}) \chi_{\interior \set{z^{k-1} < 0}}$, $t_{k-\frac 12} = (k - \frac 12) \tau$, to the update of $z$ in \eqref{z-update} rather than the problem \eqref{u-update} as was done in \cite{BBR}. This is to avoid any unwanted diffusion in $\set{z < 0}$ that would otherwise occur.

Let us comment on the choice of $\tau$ and $\delta$. The time step restriction comes from the fact that the free boundary can advance at most distance $h$ (one node distance) in one time step. We therefore take the time step $\tau < \frac h{2 V_{\rm max}}$, where $V_{\rm max}$ is some reasonable estimate on the maximum velocity of the free boundary in the problem.

The maximum principle yields $u^k > 0$ on $U$. The regularization parameter $\delta > 0$ guarantees a presence of a boundary layer in the neighborhood of the free boundary where $u^k$ is sufficiently large so that $z$ is increasing there. This boundary layer limits the resolution with which the function $g^\e := g(\frac\cdot\e, \frac\cdot\e)$ is resolved and hence we need to control its size. Let us estimate its width. Assuming a one-dimensional situation for simplicity, with free boundary position of $z^{k-1}$ located at $x_1 = 0$ with $z^{k-1} < 0$ for $x_1 > 0$, \eqref{u-update} simplifies in $\set{x_1 > 0}$ to
\begin{align*}
\frac{\lambda}{\delta} u^k - \tau u^k_{x_1x_1} = 0,
\end{align*}
and therefore the dominating term in the solution will be $\phi(x_1) = C e^{-\sqrt{\frac{\lambda}{\delta \tau}}x_1}$, where $C \approx - q_1 \sqrt{\frac{\delta \tau}{\lambda}}$ so that the derivative $\phi_{x_1}$ is approximately $q_1$ at $x_1 = 0$.

From \eqref{z-update}, the total amount of energy deposited into the negative $z$ per one time step is therefore $\int_0^\infty z^k - z^{k-1} \dx_1 = \frac1\delta\int_0^\infty \phi \dx_1 \approx -q_1 \frac\tau\lambda$, which is as expected from Fourier's law. We need to choose $\delta > 0$ so that the majority is deposited near the free boundary $\set{x_1 = 0}$. The ratio deposited in $\set{a < x_1}$ for some $a > 0$ is given by $\int_a^\infty \phi \dx_1 / \int_0^\infty \phi \dx_1 = e^{-\sqrt{\frac{\lambda}{\delta\tau}}a}$. For this to be equal to a given $\gamma \in (0,1)$ with $a = wh$, $w > 0$, we need to take
\begin{align*}
\delta = \pth{\frac w{\log \gamma}}^2 \frac{\lambda h^2}\tau.
\end{align*}
We have not observed any ill effect if we choose small $w$, so we in general set $w = 1$, $\gamma = 0.01$, which yields the formula
\begin{align}
\label{regularization-delta}
\delta \approx 4.7 \times 10^{-2} \frac{\lambda h^2}{\tau}.
\end{align}

Let us explain how we implement the update of $z$ in the set $\set{z <0}$ in the BBR method \eqref{z-update}. Since the set $\set{z > 0}$ is monotonically increasing in time if $z$ is the exact solution of \eqref{z-Stefan}, $z = -\frac{1}{\lambda g^\e}$ in $\set{z < 0}$. However, in the BBR method \eqref{z-update}, the value of $z^k$ is also influenced in $\set{z^{k-1} <0}$ by $u^k$ since $u^k > 0$ in $U$ by the comparison principle. On the other hand, $u^k$ decreases exponentially with the distance from $\set{z^{k-1} > 0}$ as observed above. We therefore make use of this fact and at a given fixed point $x$, we set $z^k(x) = -\frac{1}{\lambda g^\e(x, t_k)}$ at the first time step $k$ such that $u^k(x) > 10^{-3} \delta$, where $\delta$ is the regularization parameter in \eqref{mu-update}, and only at the later time steps we apply the update \eqref{z-update} at this node. This leads to a significant increase in the accuracy of the estimate of $r(q)$ as tested in Section~\ref{sec:numerical-results}, especially in a neighborhood of the pinning intervals. Note that \eqref{z-update} is approximately equivalent in $\set{z^{k-1} < 0} \cap \set{u^k \ll \delta}$ to a second-order accurate numerical integration of the derivative of $-\frac 1{\lambda g^\e}$. Used directly, it leads to a large error over a few periods of $g^\e$. But the pinning interval, and the value $r(q)$ in general, is very sensitive to $\max g^\e$ and $\min g^\e$ as indicated by Propostion~\ref{pr:pinning-interval}. Hence to reach a reasonable accuracy, we use the numerical integration only inside the boundary layer by implementing the above scheme.

\bigskip

Since we need to find the solutions of the Hele-Shaw problem for many different $q$ over a large time interval (relative to $\e$) to get a reliable estimate on $r(q)$, it is important to develop an efficient numerical method to solve the elliptic problem \eqref{u-update} for $u^k$.
It turns out that a multigrid scheme for the linear elliptic problem for $u^k$ works well even though $\mu^{k-1}$ has a jump across the free boundary.

\subsection{The multigrid method}

In this section we focus on the linear elliptic problem
\begin{align}
\label{elliptic-problem}
\left\{\begin{aligned}
a u - h^2 \Delta u &= f &&\text{in }U = (0,1) \times \T\\
  u_{x_1}(0, x_2) &= q_1 &&\text{for } x_2 \in [0,1)\\
  u(1, x_2) &= 0 &&\text{for } x_2 \in [0,1)\\
  u & \qquad \text{$1$-periodic in $x_2$},
\end{aligned}\right.
\end{align}
where $h > 0$ will be the discretization step, $a$ is a given bounded nonnegative function, and $f$ is a given bounded function.

We will choose $M = 2^p$ for some $p \in \N$ as the resolution and set $h = \frac{1}{M}$ and introduce $x_i = i h$, $i = 0, \ldots, M$.
We discretize the PDE using the standard finite difference method with the central difference on a 5-point stencil.
We therefore look for $v_{i,j}$, $i,j = 0, \ldots, M-1$, that approximate $u(x_i, x_j)$.
For the Neumann boundary condition, we use a ghost grid point assuming $v_{-1, j} = v_{1, j} - 2 q_1 h$.
This leads to the linear system
\begin{align}
\label{linear-system}
\left\{\begin{aligned}
  (4 + a_{i, j}) v_{i,j} - v_{i-1, j} - v_{i+1, j} - v_{i, j-1} - v_{i, j+1} &= f_{i,j}, \\
  &i = 1, \ldots, M-1, j = 0, \ldots, M-1,\\
  (4 + a_{i, j}) v_{0,j} - 2v_{1, j} - v_{0, j-1} - v_{0, j+1} &= f_{0,j} - 2q_1h, \\&j = 0, \ldots, M-1,\\
  v_{i, -1} &= v_{i, M-1},\\
  v_{i, M} &= v_{i, 0},\\
  v_{M, j} &= 0,\\
\end{aligned}\right.
\end{align}
where $f_{i, j} := f(x_i, x_j)$ and $a_{i,j} := a(x_i, x_j)$.

To solve this system, we use the standard multigrid method, see for example \cite{Thomas}. We introduce a sequence of spaces
\begin{align*}
V^{2^mh} &= \R^{(M/2^m - 1) \times (M/2^m-1)}, \qquad m = 0, \ldots, p.
\end{align*}
On each of these we will solve the linear system with appropriately adjusted $M$.
We need to introduce the grid transfers.
The restriction operator $I_h^{2h} : V^h \to V^{2h}$ is defined by the standard weighted sum
\begin{align*}
(I_h^{2h} v^h)_{i,j} &=
\frac{v^h_{2i,2j}}4 +\frac{v^h_{2i-1,2j} + v^h_{2i+1,2j} + v^h_{2i,2j-1} + v^h_{2i,2j+1}}{8}\\
&\quad +\frac{v^h_{2i-1,2j-1} + v^h_{2i+1,2j-1} + v^h_{2i-1,2j+1} + v^h_{2i+1,2j+1}}{16},
\end{align*}
where we assume the periodic extension in $j$ and we assume that $v^h$ is even across $i = 0$, that is,
\begin{align*}
  v^h_{-1,j} &= v^h_{1, j},\\
  v^h_{i, -1} &= v^h_{i, M-1},\qquad
  v^h_{i, M} = v^h_{i, 0}.
\end{align*}
Indeed, the error correction will satisfy the boundary condition $u_{x_1}(0, x_2) = 0$.

The prolongation operator $I_{2h}^h: V^{2h} \to V^h$ is the standard prolongation
\begin{align*}
(I_{2h}^{h} v^{2h})_{2i,2j} &= v^{2h}_{i,j},\\
(I_{2h}^{h} v^{2h})_{2i+1,2j} &= \frac12\pth{v^{2h}_{i,j} + v^{2h}_{i+1,j}},\\
(I_{2h}^{h} v^{2h})_{2i,2j+1} &= \frac12\pth{v^{2h}_{i,j} + v^{2h}_{i,j+1}},\\
(I_{2h}^{h} v^{2h})_{2i+1,2j+1} &= \frac14\pth{v^{2h}_{i,j} + v^{2h}_{i,j+1} + v^{2h}_{i+1,j} + v^{2h}_{i+1,j+1}},
\end{align*}
again assuming the periodic extension $v^{2h}_{i, M} = v^{2h}_{i, 0}$.

By $A^{2^mh}$ we will denote the matrix of the linear system \eqref{linear-system} for grid with resolution $M/2^m$ with $a^{2^m h}$ defined recursively as
\begin{align*}
a^{2h} = 4 I_h^{2h} a^h.
\end{align*}

We perform the following multigrid V-cycle:

\begin{enumerate}
\item$ k_1$ times iterate the smoother for $A^h v^h = b^h$ with initial
  guess $v^{h, (0)}$, obtaining $v^{h,(k_1)}$.

\item Find the residual $r^h = b^h - A^h v^{h,(k_1)}$

\item Restrict the right-hand side $b^{2h} = 4 I_h^{2h} r^h$.

\item Solve $A^{2h} e^{2h} = b^{2h}$ on a half-resolution grid recursively.

\item Correct the approximation $\tilde v^{h, (k_1)} = v^{h, (k_1)} + I_{2h}^h
  e^{2h}$.

\item $k_2$ times iterate the smoother for $A^h v^h = b^h$ with initial
  guess $\tilde v^{h, (k_1)}$.
\end{enumerate}
The problem $A^{1}  e^{1} = b^{1}$ is solved exactly.

To improve the convergence, we solve for $e^{2h}$ using two V-cycles, with initial guess $e^{2h} = 0$.

As a smoother we implement the damped Jacobi method with damping constant $\omega = \frac 23$. We perform $k_1 = k_2 = 4$ relaxation iterations.
This reduces the maximum norm of the residual by about a factor of $10$ per iteration, see Table~\ref{tab:multigrid-residual}.
\begin{table}
\begin{center}
\begin{tabular}{c|ccccc}
iteration $k$ & 0 & 1 & 2 & 3 & 4\\
\hline
   $\norm{r^{(k)}}$&
  $1.95\times 10^{-3}$&
  $3.11\times 10^{-5}$&
  $1.54\times 10^{-6}$&
  $8.95\times 10^{-8}$&
  $5.53\times 10^{-9}$
\end{tabular}
\end{center}
\caption{Evolution of the residual in the multigrid method with $M = 1024$, $h = 1 /M$, $b^h = 0$, $a_{i,j} = 1000h^2$ if $x_i + 0.1 \sin(6\pi x_j) > 0.5$ and $a_{i,j} = h^2$ otherwise, and $f_{i,j} = 0$, with initial guess $v^h = 0$.}
\label{tab:multigrid-residual}
\end{table}

To estimate the time complexity, we observe that the matrix multiplication and the application of prolongation and restriction operators have each approximately the time complexity of a single Jacobi iteration. With these parameters, a simple estimate places the time complexity of the V-cycle at about 22 Jacobi iterations. Moreover, the method is parallelizable in a straightforward manner. We did not explore this point since we need to run a large number of computations and therefore can take advantage of a process-level parallelism.

\subsection{Application of the multigrid solver to the BBR scheme}

To find $u^k$ in \eqref{u-update}, we apply the above multigrid solver to \eqref{elliptic-problem} with $a = \frac{\lambda h^2}{\tau} \mu^{k-1}$ and $f = \frac{\lambda h^2}{\tau} \mu^{k-1} \beta(z^{k-1})$, and we use $u^{k-1}$ as the initial guess. In our computations it is generally sufficient to perform a fixed number of V-cycles per time step. We perform in general 1 to 3 V-cycles.

\subsection{General direction}\label{rotation}
\label{sec:general-direction}

So far we have assumed that $q = (q_1, 0)$ with $q_1 < 0$. To handle general $q \in \R^2 \setminus \set0$, we rotate the coordinate system so that $q$ is of this form. That is, instead of $g$ we consider
\begin{align*}
\tilde g(x, t) = g(x_1 \zeta + x_2 \zeta^\perp, t),
\end{align*}
where $\zeta^\perp = (-\zeta_2, \zeta_1) = \pth{\frac{q_2}{|q|}, -\frac{q_1}{|q|}}$.

Of course, in general $\tilde g$ is not periodic in $x_2$, unless there exist integers $n_1, n_2 \in \Z$, $n_1 n_2 \neq 0$, such that $n_1 q_1 + n_2 q_2 = 0$, that is, unless $q$ is a \emph{rational} direction. These are however the only directions that we can consider numerically.

By taking $\sigma = \frac{q_1}{n_2} = -\frac{q_2}{n_1}$ and $m_1 = n_2$, $m_2 = -n_1$, it can be easily seen that $q$ is a rational direction if and only if there exist $\sigma > 0$ and two integers $m_1, m_2 \in \Z$ such that $q = (m_1 \sigma, m_2\sigma)$. Let us show how we can choose $\e$ so that the solution of the $\e$-problem is $1$-periodic in the $x_2$ direction. To this end, we shall find the minimal period of $\tilde g$ first. This is equivalent to finding the smallest $s > 0$ such that $s \zeta^\perp \in \Z^2$.

\begin{lemma}
\label{le:}
If $m_1$ and $m_2$ are coprime, then $s = (m_1^2 + m_2^2)^{\frac12}$ is the smallest $s > 0$ such that $s \zeta^\perp \in \Z^2$.
\end{lemma}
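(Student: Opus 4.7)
The plan is to reduce the statement to the elementary fact that, for coprime integers $m_1, m_2$, the smallest $t > 0$ with $t m_1, t m_2 \in \Z$ is $t = 1$. The reduction comes from writing $\zeta^\perp$ explicitly: since $q = (m_1 \sigma, m_2 \sigma)$ with $\sigma > 0$, we have $|q| = \sigma (m_1^2 + m_2^2)^{1/2}$, so $\zeta = (m_1, m_2)/(m_1^2+m_2^2)^{1/2}$ and therefore $\zeta^\perp = (m_2, -m_1)/(m_1^2+m_2^2)^{1/2}$ (up to the sign convention used in Section~\ref{sec:general-direction}). Setting $t := s/(m_1^2 + m_2^2)^{1/2}$, the condition $s \zeta^\perp \in \Z^2$ becomes simply $t m_1 \in \Z$ and $t m_2 \in \Z$, and minimising $s > 0$ is equivalent to minimising $t > 0$.

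Next I would invoke B\'ezout's identity: coprimality of $m_1$ and $m_2$ gives $a, b \in \Z$ with $a m_1 + b m_2 = 1$, hence
\begin{equation*}
t = a (t m_1) + b (t m_2) \in \Z.
\end{equation*}
Combined with $t > 0$ this forces $t \geq 1$, while $t = 1$ trivially satisfies both integrality conditions. Therefore the smallest admissible $t$ is $1$, giving $s = (m_1^2 + m_2^2)^{1/2}$.

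The only subtlety is the degenerate case in which one of $m_1, m_2$ vanishes; but then coprimality forces the other to be $\pm 1$, and both the computation of $\zeta^\perp$ and the conclusion $s = 1$ go through unchanged. There is no real obstacle in the argument: once the problem is translated from a lattice condition on $s\zeta^\perp$ to a joint divisibility condition on $t m_1$ and $t m_2$, B\'ezout's identity closes the proof immediately.
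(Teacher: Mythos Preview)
Your proof is correct and reduces to the same elementary fact as the paper's: both arguments amount to showing that for coprime integers $m_1,m_2$ the smallest $t>0$ with $tm_1,tm_2\in\Z$ is $t=1$. The paper establishes this by contradiction---writing a hypothetical smaller ratio $\tilde s/s=p/q$ in lowest terms and noting that $q>1$ would then divide both $m_1$ and $m_2$---whereas you use B\'ezout's identity to conclude directly that $t=a(tm_1)+b(tm_2)\in\Z$. Your route is slightly cleaner, since it avoids the intermediate step of first arguing that $\tilde s/s$ is rational; on the other hand, the paper's argument makes explicit the role of common divisors, which ties in naturally with the $\gcd(m_1,m_2)$ appearing in the subsequent choice of $\e$.
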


\begin{proof}
Note that $\zeta^\perp = \frac{(m_2, -m_1)}{(m_1^2 + m_2^2)^{\frac12}}$. Clearly
\begin{align*}
s\zeta^\perp = (m_2, -m_1) \in \Z^2.
\end{align*}

Now suppose that there is $0 < \tilde s < s$ such that $\tilde s \zeta^\perp \in \Z^2$. But then $\frac{\tilde s}s s \zeta^\perp = \frac{\tilde s}{s} (m_2, -m_1) \in \Z^2$. In particular $\frac{\tilde s}{s} \in \Q$. Suppose that $\frac{\tilde s}{s} = \frac{p}{q}$, where $p, q$ are coprime. Since $\frac pq < 1$, $q > 1$ is a divisor of both $m_1$ and $m_2$. But that is a contradiction with $m_1$ and $m_2$ being coprime.
\end{proof}

Given a general $q = (m_1 \sigma, m_2 \sigma)$,
it is therefore sufficient to choose
\begin{align*}\e = \frac{\operatorname{gcd}(m_1, m_2)}{d(m_1^2 +m_2^2)^\frac12}\end{align*}
for some integer $d \in \N$, where $\operatorname{gcd}$ stands for the greatest common divisor, and the solution will be $1$-periodic in the $x_2$ direction. Note that this limits the angular resolution of our method. For example, near the $x_1$-axis, to compute $q = (m_1 \sigma, \sigma)$ near a fixed $\hat q = (\hat q_1, 0)$, we must take $\e < \frac 1{m_1}$ which requires large resolution $M$ for small $\sigma$ since $m_1 \approx \frac{\hat q_1}{\sigma}$.

\section{Numerical results}
\label{sec:numerical-results}

To test the numerical method, we estimate the homogenized velocity $r(q)$ for a few simple functions $g$. Namely, we consider
\begin{subequations}
\label{g-choices}
\begin{align}
\label{g-one}
g(x,t) &= \sin(2\pi(x_1 + t)) + 2,\\
\label{g-two}
g(x,t) &= \sin(2\pi(x_1 + t)) + \sin(2\pi(x_2 + t)) + 3,\\
\label{g-four}
g(x,t) &= \frac 12 \cos(2\pi t) \Big(\sin(2\pi x_1) + \sin(2\pi x_2)\Big) + 2,\\
g(x,t) &= \sin(2\pi(x_1 + t)) + \sin(2\pi(x_1 + 3 t)) + 3.
\end{align}
\end{subequations}
By Proposition~\ref{pr:pinning-interval}, the pinning interval with $r(q) = 1$ in \eqref{g-one} is $[\frac 13, 1] \times \set{0}$. Note that while \eqref{g-four} has the form of a standing wave, it is in fact a superposition of four traveling waves moving with speed $1$ in directions $(\pm1,0)$ and $(0,\pm1)$.

We always take $\lambda = 10^{-7}$, $\tau = \frac h8$ and $\delta$ as in \eqref{regularization-delta}, and $L_0 = 0.1$, $L_1 = 0.9$. The values of $r(q)$ are estimated for a range of $q = (m_1 \sigma, m_2 \sigma)$, with $\sigma = \frac{6.4}{M}$, $m_1, m_2 \in \Z$. For given $q$, we determine $\e$ following Section~\ref{sec:general-direction} as
\begin{align*}
\e = \frac 1{d (m_1^2 + m_2^2)^{\frac 12}}, \qquad d = \max \pth{1, \operatorname{round}\pth{\frac{9M}{64(m_1^2 + m_2^2)^{\frac12}}}}.
\end{align*}
This is done so that neighboring points have similar $\e$. Values $\sigma$ smaller than the above lead to high frequency oscillations in the estimate of $r(q)$ since $\e$ is then forced to be too small in proportion to $h = \frac 1M$. We always use $2$ V-cycles per time step, unless otherwise noted. These parameters produce very consistent results across a wide range of resolutions $64 \leq M \leq 1024$ that we tested, see Figures~\ref{fig:rg1}--\ref{fig:rg4}. In our numerical tests, the value of $\lambda$, if it is chosen sufficiently small, appears to have a negligible influence on the results, well within the errors reported in Table~\ref{tab:accuracy} for example.

The computational time necessary to estimate a single $r(q)$ is $O(M^3)$, and to produce a contour plot with the above resolution $\sigma$ is $O(M^5)$.

\subsection{Discussion}

We observed a number of pinning intervals for a few examples of coefficients $g$. The behavior of $r(q)$ in a neighborhood of the pinning intervals is surprisingly consistent across our computations. Namely, the velocity is pinned to a constant value only along a single critical direction, and far away from the pinning interval the value $r(q)$ is proportional to $|q|$ as in the time-independent case. Moreover, $r(q)$ appears to be only Lipschitz continuous at the points on the relative interior of the pinning interval, see Figure~\ref{fig:pinningq1}. For the critical direction, this has a boosting effect for smaller $|q|$ and slowing effect for larger $|q|$ along the pinning interval, compared to the nearby directions. We observed that this leads to an appearance of a stable flat part (facet) of the free boundary  in the critical direction, see Figure~\ref{fig:facet}.

In the particular case \eqref{g-two}, there is an indication of the appearance of a whole class of pinning intervals near the main diagonal $(1,1)$ as a sort of resonance between the pinning intervals in directions $(1,0)$ and $(0,1)$, see Figure~\ref{fig:rg2}. Some of the level sets in the first quadrant have an appearance of the level set of the $\ell^1$ norm $\norm{p}_1 := |p_1| + |p_2|$, which is a typical example of a so-called crystalline anisotropy. Somewhat surprisingly, no such effect is apparent in the case \eqref{g-four} with four traveling waves in the axis directions, see Figure~\ref{fig:rg4}.

Our estimate of $r(q)$ appears to be first order accurate in $M$, see Table~\ref{tab:accuracy}, and consistent when changing the resolution and other parameters, see Figure~\ref{fig:rg1} and Figure~\ref{fig:facet}.

\begin{figure}
\centering
	\subfloat{\includegraphics[clip, trim=0.5cm 2.cm 0.5cm 3.cm, width=\textwidth]{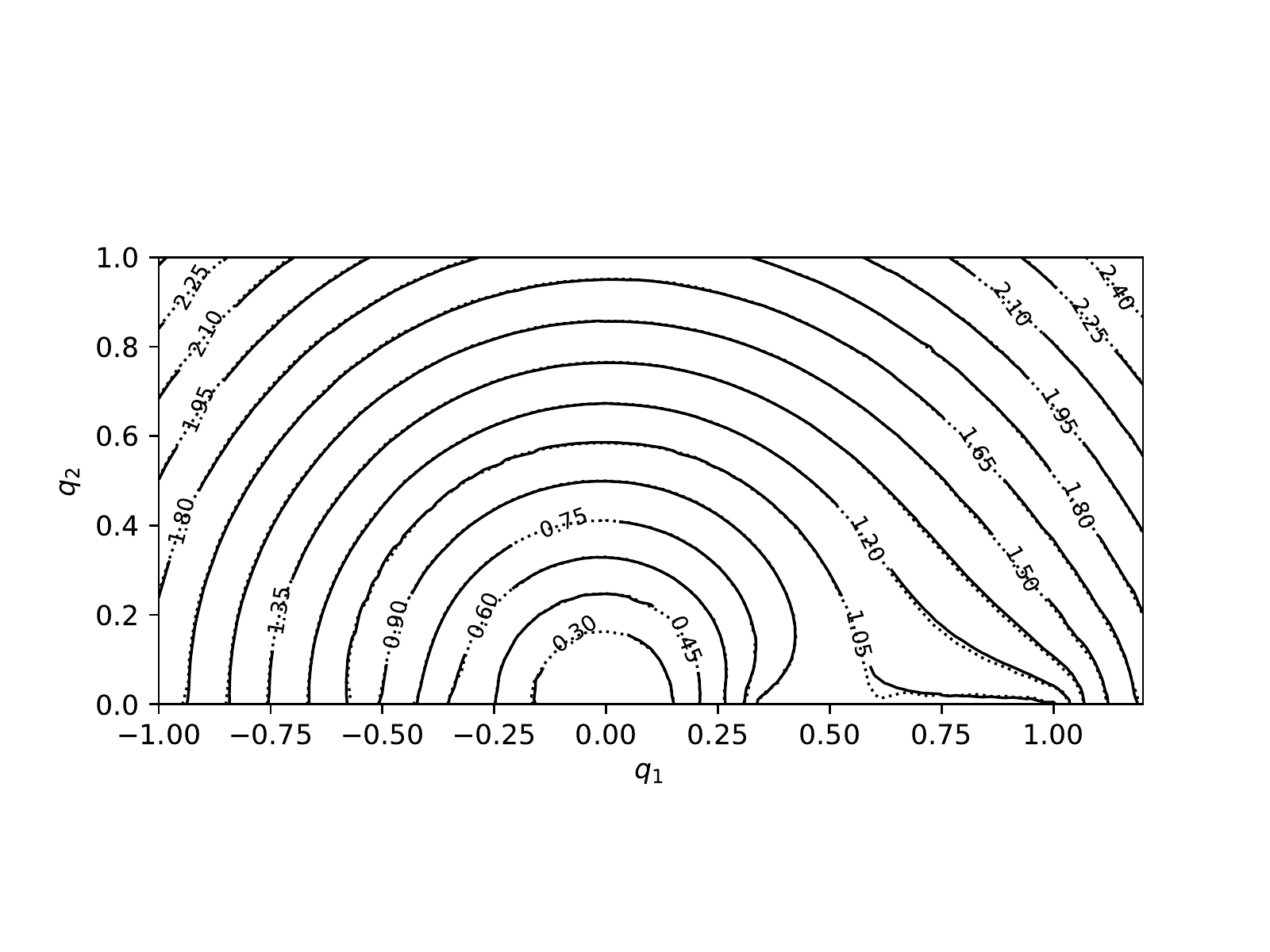}\label{fig:rg1_res}}\\
	\subfloat{\includegraphics[clip, trim=0.5cm 4.2cm 0.5cm 5.cm, width=\textwidth]{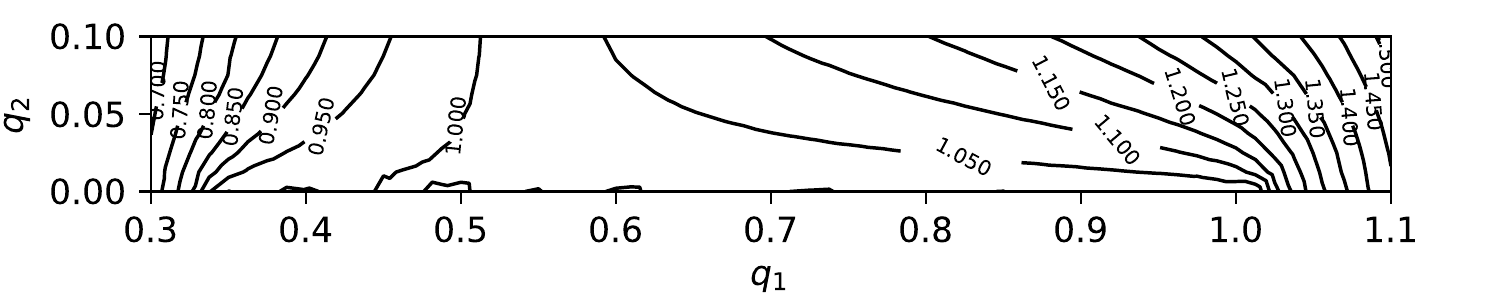}\label{fig:rg1_high}}
\caption{(Top) The contour plot of $r(q)$ with $g(x,t)= \sin (2 \pi (x_1+t)) +2$. The pinning interval $[\frac 13, 1] \times \set{0}$ is apparent, see Proposition~\ref{pr:pinning-interval}, where the average velocity is pinned to $1$. The solid contours were obtained with $M = 256$, while the dotted contours were obtained with $M  = 128$.
	(Bottom) Detail of the pinning interval computed with $M = 512$.}
\label{fig:rg1}
\end{figure}

\begin{figure}
	\centering
	\includegraphics[clip, trim=0.cm 0cm 0.cm .5cm, width=0.75\textwidth]{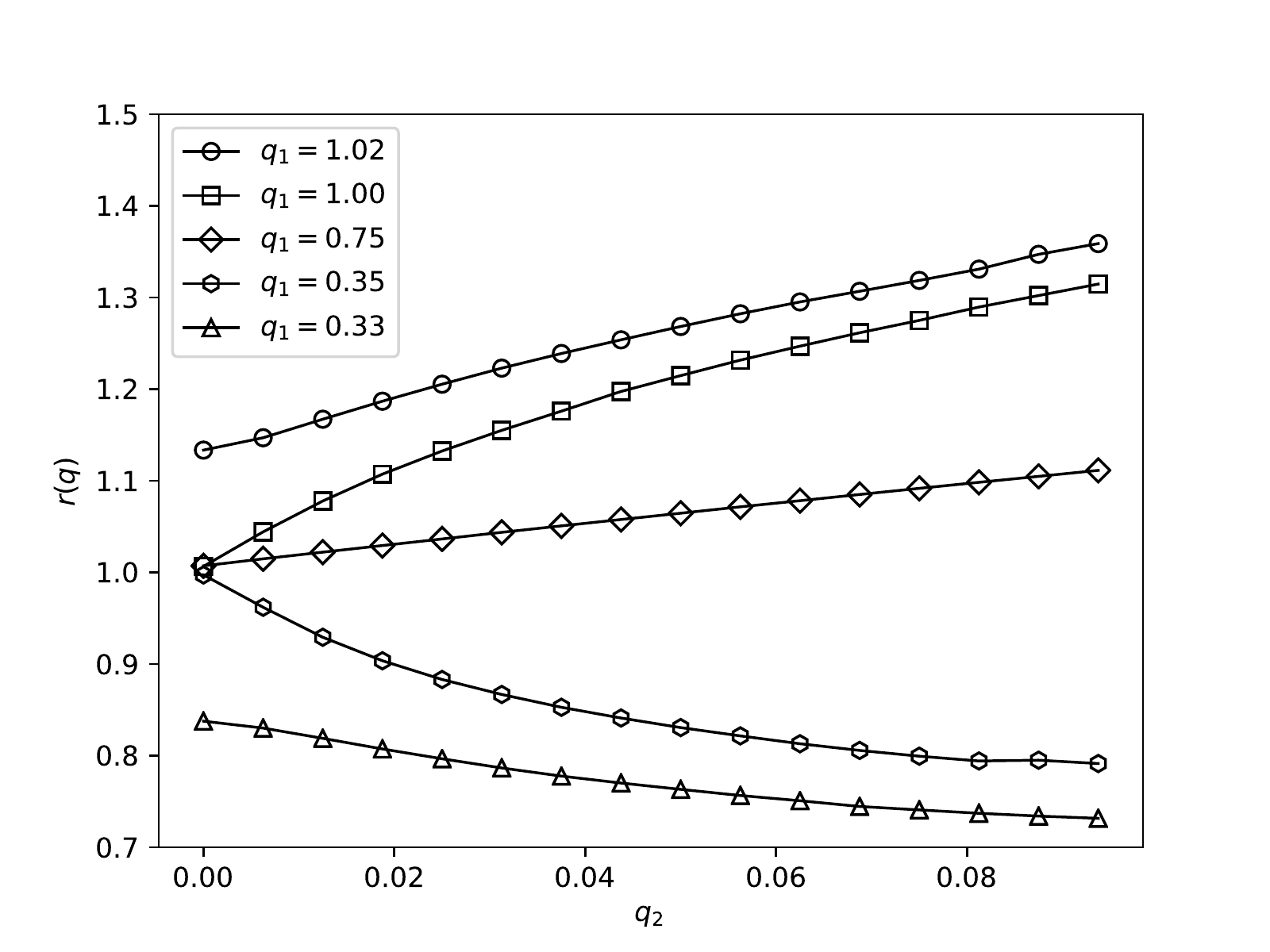}
	\caption{Values of $r(q)$, $q = (q_1, q_2)$, for $g(x,t)= \sin (2 \pi (x_1+t)) +2$ with $M = 1024$ as a function of $q_2$ for several chosen of $q_1$.}
	\label{fig:pinningq1}
\end{figure}

\begin{figure}
	\centering
  \subfloat{\includegraphics[clip, trim=0.5cm 1.5cm 0.5cm 2.5cm, width=\textwidth]{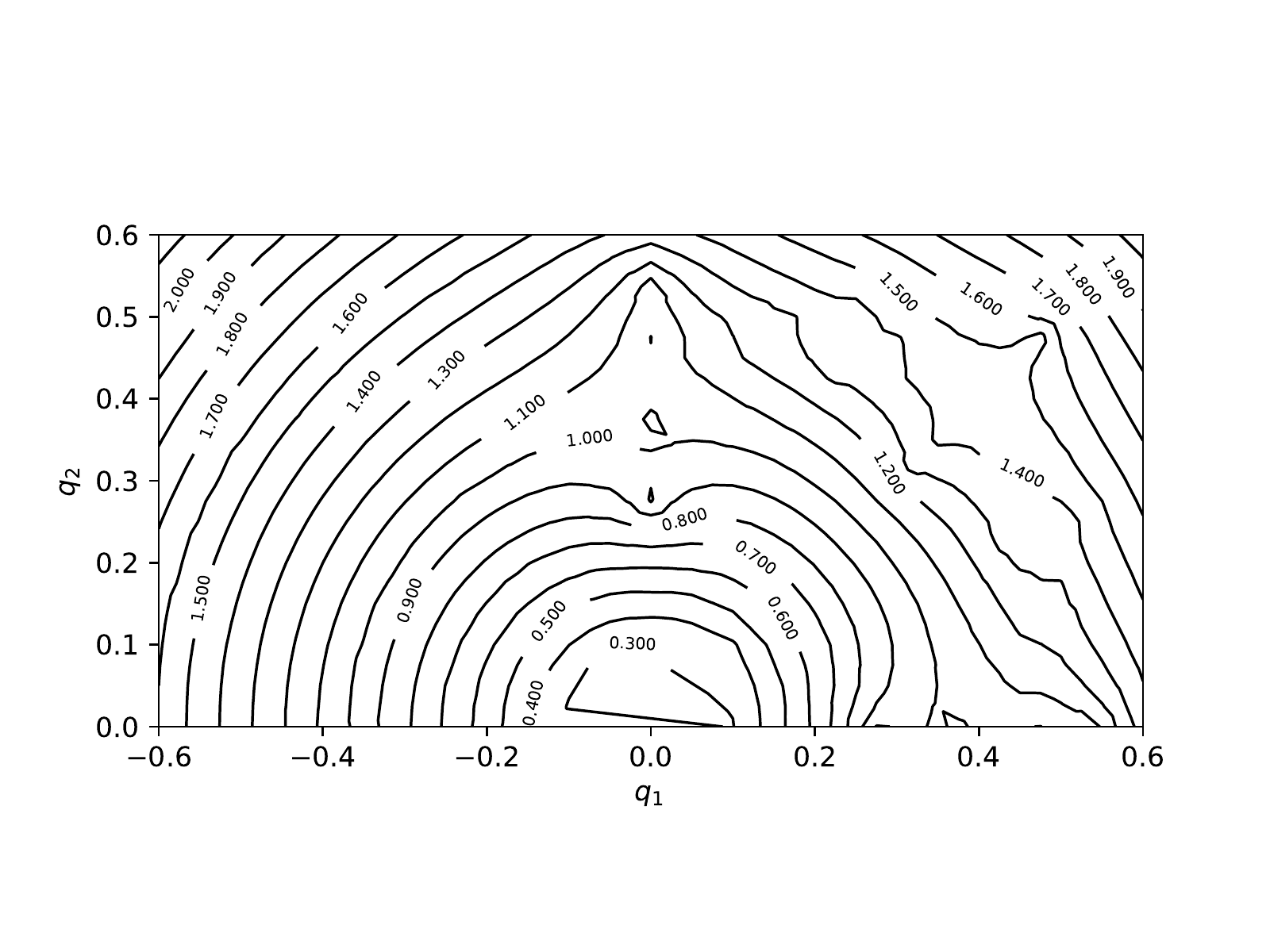}}\\
	\subfloat{\includegraphics[clip, trim=0.5cm 0.0cm 0.5cm 1cm,  width=\textwidth]{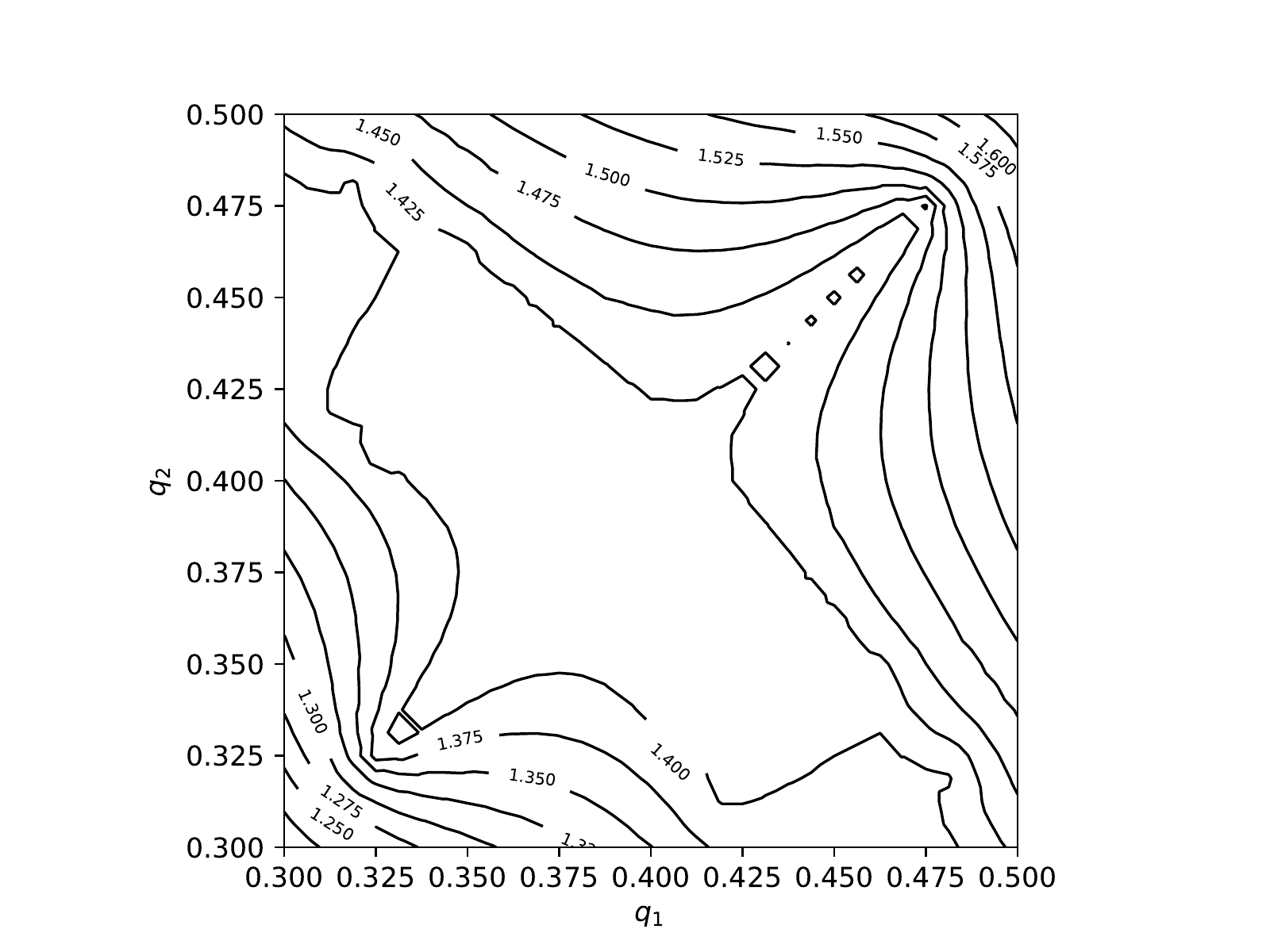}}
	\caption{(Top) Contour plot of $r(q)$ with $g(x,t)= \sin (2 \pi (x_1+t)) + \sin (2 \pi (x_2+t)) +3$ and $M = 256$. As expected, pinning intervals appear in directions $(1,0)$ and $(0,1)$, however, there is also a visible pinning interval in direction $(1,1)$ where the velocity appears to be pinned to $\sqrt{2}$. The plot is symmetric with respect to the reflection across the direction $(1,1)$.
	(Bottom) Detail with $M = 1024$ around the pinning interval with velocity $\sqrt{2}$. There might be other pinning intervals nearby. The small squares along the diagonal are artifacts of contour reconstruction.}
	\label{fig:rg2}
\end{figure}

\begin{figure}
	\centering
	\includegraphics[clip, trim=0.5cm 3.cm 0.5cm 3.5cm, width=\textwidth]{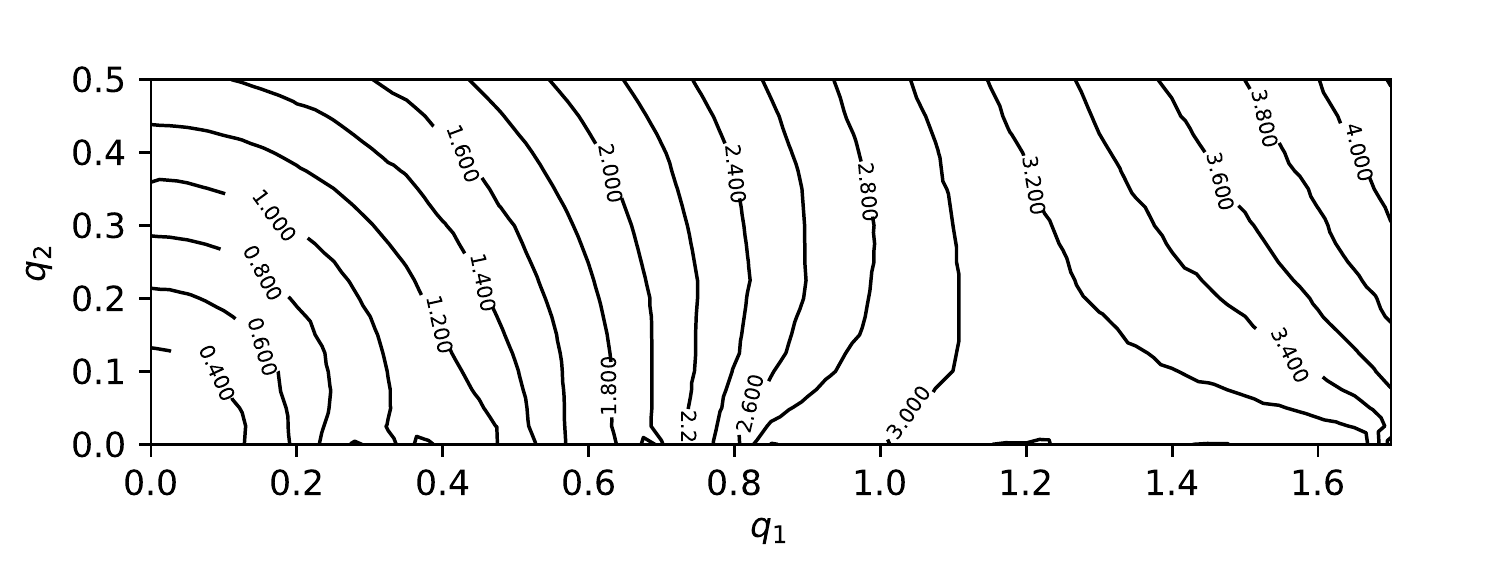}
	\caption{Contour plot of $r(q)$ with $g(x,t)= \sin (2 \pi (x_1+t)) + \sin (2 \pi (x_1+3t)) +3$ computed with $M = 256$. The two visible pinning intervals along the $q_1$ axis where the velocity is pinned to $1$ and $3$ match those in Figure~\ref{fig:rq}.}
	\label{fig:rg_double}
\end{figure}

\begin{figure}
	\centering
	\includegraphics[clip, trim=0.5cm 0cm 0.5cm 0cm, width=\textwidth]{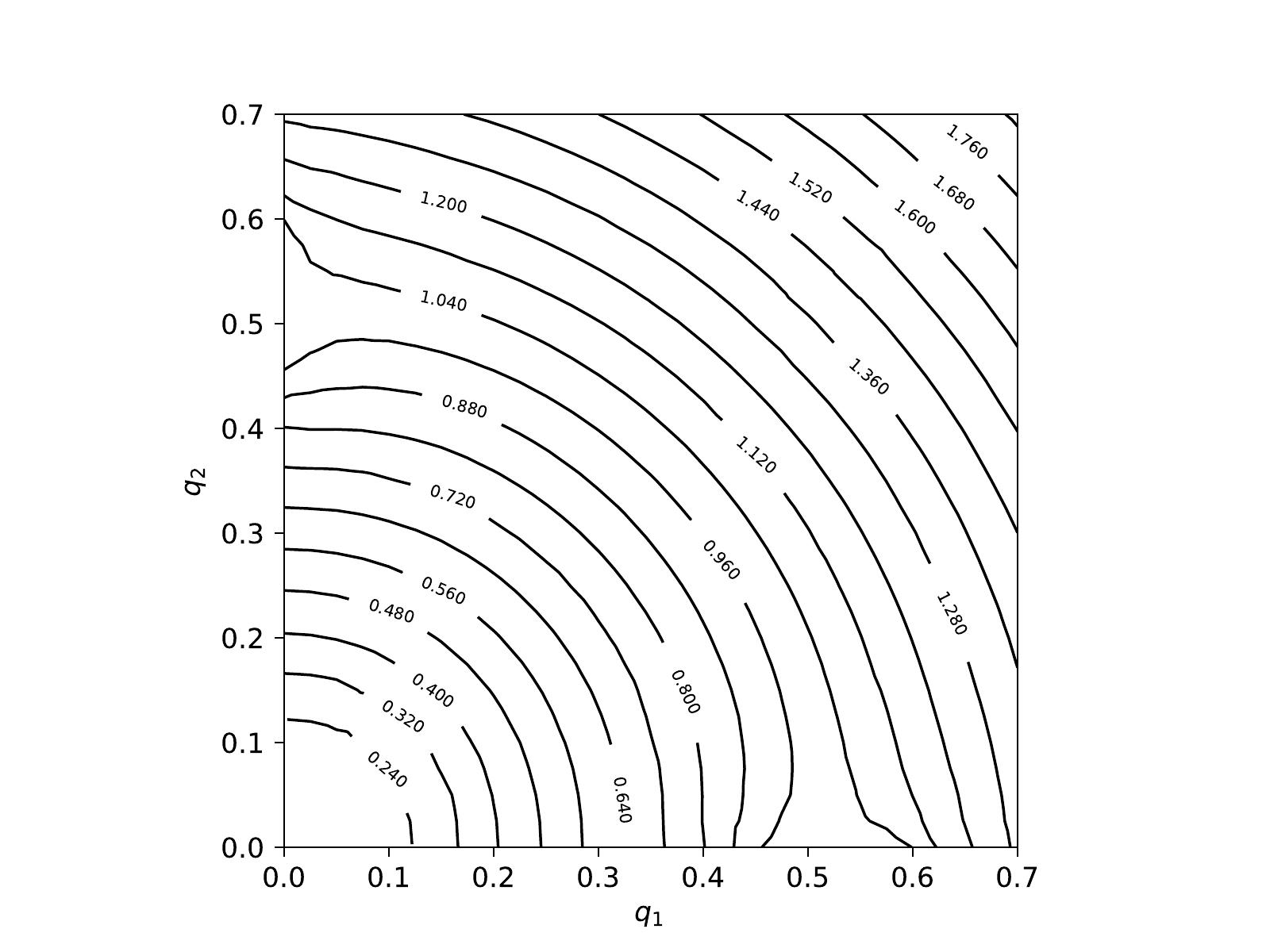}
	\caption{Contour plot of $r(q)$ with $g(x,t)= \frac 12 \cos(2 \pi t) (\sin (2 \pi x_1) + \sin (2 \pi x_2)) +2$ with $M = 256$. Pinning intervals appear in the directions of $(1,0)$, $(-1,0)$, $(0,1)$ and $(0,-1)$ (the plot is symmetric with respect to the rotation by $\frac \pi2$).}
	\label{fig:rg4}
\end{figure}

\begin{figure}
	\centering
	\subfloat{
		\includegraphics[clip,trim=0cm 0cm 0cm 1.5cm,width = 0.8 \textwidth]{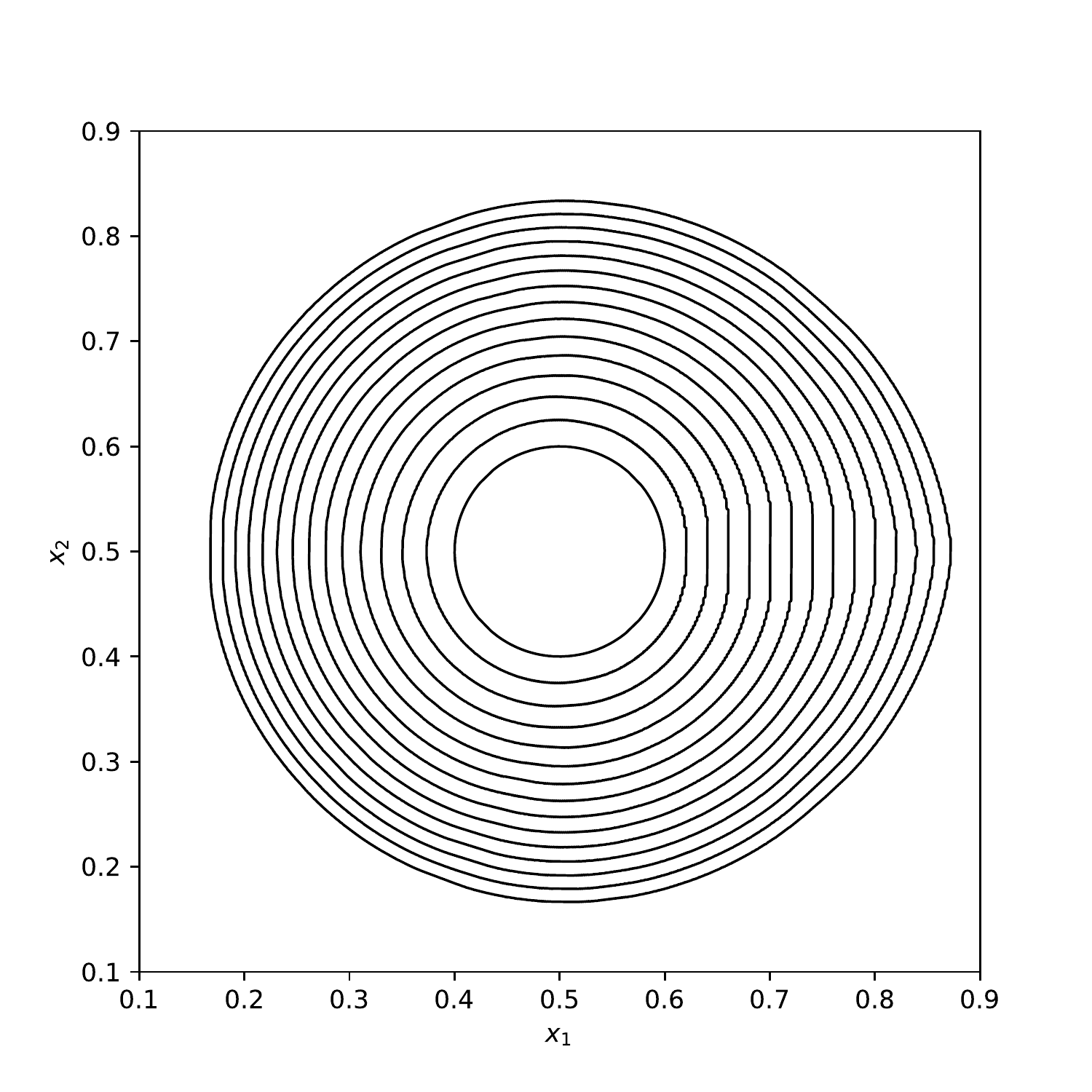}
	}\\
	\subfloat{%
		\includegraphics[clip,trim=0.cm 3cm 0.5cm 4cm,width=0.8\textwidth]{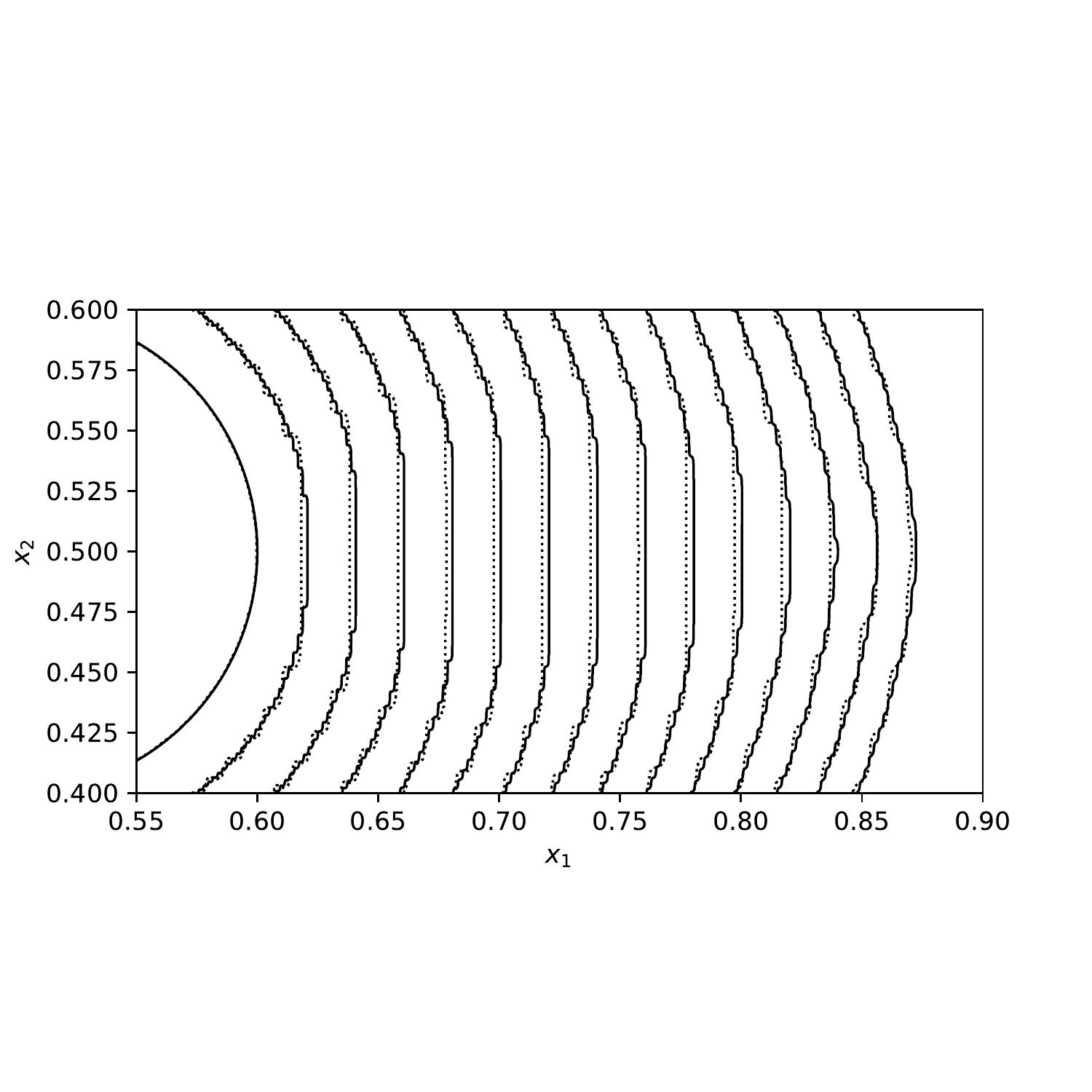}
	}
	\caption{(Top) The free boundary of the numerical solution of the Hele-Shaw problem with a given source $f=1500 \max\{0.1 - |x - (\frac 12, \frac 12)|,0\}$ and a function $g(x,t)=\sin(2 \pi (x_1 - t)) + 1.05$ with initial data $\Omega_0 = \set{x: |x - (\frac 12, \frac 12)| < 0.1}$. The free boundary is plotted at times $t = 0.02 m$, $m \in N$. A facet seems to appear in direction $(1, 0)$. It reaches its maximum length at $t\approx 0.12 $. Solid line is the solution with $M = 8192$, $\e = \frac 1{512}$, while the dotted line is with $M = 2048$, $\e = \frac 1{128}$. We used 1 V-cycle. (Bottom) Detail of the region with facets.}
	\label{fig:facet}
\end{figure}

\clearpage
\begin{table}
\begin{center}
\begin{tabular}{c|c|c|c|c|c}
\multirow{2}{*}{$M$} & \multicolumn{5}{c}{$\e^{-1}$} \\
\cline{2-6}
& $8$ & $16$ & $32$ & $64$ & $128$ \\
\hline
$64$ & $8.7\times 10^{-2}$ & $6.3\times 10^{-2}$ & $5.2\times 10^{-1}$ & $1.0\times 10^{0}$ & $1.5\times 10^{0}$ \\
$128$ & $5.9\times 10^{-2}$ & $5.3\times 10^{-2}$ & $5.9\times 10^{-2}$ & $3.9\times 10^{-1}$ & $1.0\times 10^{0}$ \\
$256$ & $6.5\times 10^{-2}$ & $5.8\times 10^{-2}$ & $2.1\times 10^{-2}$ & $5.0\times 10^{-2}$ & $4.0\times 10^{-1}$ \\
$512$ & $7.9\times 10^{-2}$ & $5.7\times 10^{-2}$ & $2.0\times 10^{-2}$ & $1.1\times 10^{-2}$ & $5.9\times 10^{-2}$ \\
\end{tabular}
\end{center}
\caption{The maximum of the error of the numerical estimate of $r(q)$ for $g(x,t) = \sin(2\pi(x_1 + t)) + 2$ as compared to the estimate of $r(q)$ using the ODE method in one dimension described in Section~\ref{sec:one-dimension} for $q = (q_1, 0)$, with a small sample of values $q_1$ away from the ends of the pinning interval $[\frac 13, 1]$ for various values of parameters $M$ and $\e$. Our method appears to be first order accurate in $M$ if $\e$ is chosen appropriately.}
\label{tab:accuracy}
\end{table}

\section*{Acknowledgments}
The first author acknowledges the support of Ministry of Education, Culture, Sports, Science, and Technology of Japan (MEXT) through the Program for the Development of Global Human Resources in Mathematical and Physical Sciences at Kanazawa University. The first author would also like to show her sincere gratitude to Professor Seiro Omata for his support and guidance.
The second author was partially supported by JSPS KAKENHI Grant No. 18K13440 (Wakate).


\section*{References}

\begin{biblist}

\bib{Voller}{article}{
title = {Anomalous behaviors during infiltration into heterogeneous porous media},
journal = {Advances in Water Resources},
volume = {113},
pages = {180--188},
year = {2018},
issn = {0309-1708},
doi = {https://doi.org/10.1016/j.advwatres.2018.01.010},
url = {http://www.sciencedirect.com/science/article/pii/S0309170817309715},
author = {Aar\~ao Reis, F. D. A.},
author = {Bolster, D.},
author = {Voller, V. R.},
}

\bib{BBR}{article}{
   author={Berger, Alan E.},
   author={Br\'ezis, Ha\"\i m},
   author={Rogers, Joel C. W.},
   title={A numerical method for solving the problem $u_{t}-\Delta f(u)=0$},
   journal={RAIRO Anal. Num\'er.},
   volume={13},
   date={1979},
   number={4},
   pages={297--312},
   issn={0399-0516},
   review={\MR{555381}},
   doi={10.1051/m2an/1979130402971},
}

\bib{CF}{article}{
author={L.A. Caffarelli},
author={A. Friedman},
title={Continuity of the temperature in the Stefan problem},
date={1979},
journal={Indiana Univ. Math. J.},
pages={53--70},
number={28},
review={MR 80i:35104},
}

\bib{Feldman}{article}{
  author={Feldman, W.},
  title={Personal communication},
}

\bib{FK}{article}{
  author={Friedman, Avner},
  author={Kinderlehrer, David},
  title={A one phase Stefan problem},
  journal={Indiana Univ. Math. J.},
  volume={24},
  date={1974/75},
  number={11},
  pages={1005--1035},
  issn={0022-2518},
  review={\MR{0385326}},
}

\bib{IM}{article}{
   author={Ibrahim, H.},
   author={Monneau, R.},
   title={On the rate of convergence in periodic homogenization of scalar
   first-order ordinary differential equations},
   journal={SIAM J. Math. Anal.},
   volume={42},
   date={2010},
   number={5},
   pages={2155--2176},
   issn={0036-1410},
   review={\MR{2729435}},
   doi={10.1137/080738830},
}

\bib{JST}{article}{
title = {Large time average of reachable sets and Applications to Homogenization of interfaces moving with oscillatory spatio-temporal velocity},
journal = {Discrete \& Continuous Dynamical Systems - S},
volume = {11},
number = {},
pages = {915--939},
year = {2018},
doi = {10.3934/dcdss.2018055},
author = {Jing, Wenjia },
author = {Souganidis, Panagiotis E.},
author = {Tran, Hung V.},
}

\bib{KM_ARMA}{article}{
   author={Kim, Inwon C.},
   author={Mellet, Antoine},
   title={Homogenization of a Hele-Shaw problem in periodic and random
   media},
   journal={Arch. Ration. Mech. Anal.},
   volume={194},
   date={2009},
   number={2},
   pages={507--530},
   issn={0003-9527},
   review={\MR{2563637}},
   doi={10.1007/s00205-008-0161-1},
}

\bib{KM_TransAMS}{article}{
   author={Kim, Inwon C.},
   author={Mellet, Antoine},
   title={Homogenization of one-phase Stefan-type problems in periodic and
   random media},
   journal={Trans. Amer. Math. Soc.},
   volume={362},
   date={2010},
   number={8},
   pages={4161--4190},
   issn={0002-9947},
   review={\MR{2608400 (2011d:35037)}},
   doi={10.1090/S0002-9947-10-04945-7},
}

\bib{KM_drops}{article}{
   author={Kim, Inwon},
   author={Mellet, Antoine},
   title={Liquid drops sliding down an inclined plane},
   journal={Trans. Amer. Math. Soc.},
   volume={366},
   date={2014},
   number={11},
   pages={6119--6150},
   issn={0002-9947},
   review={\MR{3256195}},
   doi={10.1090/S0002-9947-2014-06236-3},
}

\bib{Lin}{article}{
   author={Lin, Jessica},
   title={On the stochastic homogenization of fully nonlinear uniformly
   parabolic equations in stationary ergodic spatio-temporal media},
   journal={J. Differential Equations},
   volume={258},
   date={2015},
   number={3},
   pages={796--845},
   issn={0022-0396},
   review={\MR{3279354}},
   doi={10.1016/j.jde.2014.10.005},
}

\bib{Lacey}{article}{
   author={Lacey, A. A.},
   title={Bounds on solutions of one-phase Stefan problems},
   note={Complex analysis and free boundary problems (St.\ Petersburg,
   1994)},
   journal={European J. Appl. Math.},
   volume={6},
   date={1995},
   number={5},
   pages={509--516},
   issn={0956-7925},
   review={\MR{1363760}},
   doi={10.1017/S095679250000200X},
}

\bib{LR}{article}{
   author={Louro, Bento},
   author={Rodrigues, Jos\'e-Francisco},
   title={Remarks on the quasisteady one phase Stefan problem},
   journal={Proc. Roy. Soc. Edinburgh Sect. A},
   volume={102},
   date={1986},
   number={3-4},
   pages={263--275},
   issn={0308-2105},
   review={\MR{852360}},
   doi={10.1017/S0308210500026354},
}

\bib{Murakawa}{article}{
   author={Murakawa, Hideki},
   title={A linear scheme to approximate nonlinear cross-diffusion systems},
   journal={ESAIM Math. Model. Numer. Anal.},
   volume={45},
   date={2011},
   number={6},
   pages={1141--1161},
   issn={0764-583X},
   review={\MR{2833176}},
   doi={10.1051/m2an/2011010},
}

\bib{Piccinini1}{article}{

   author={Piccinini, L. C.},
   title={Homogeneization problems for ordinary differential equations},
   journal={Rend. Circ. Mat. Palermo (2)},
   volume={27},
   date={1978},
   number={1},
   pages={95--112},
    issn={0009-725X},
    review={\MR{542236 (81a:34015a)}},
    doi={10.1007/BF02843869},
}

\bib{Pozar_ARMA}{article}{
   author={Po{\v{z}}{\'a}r, Norbert},
   title={Homogenization of the Hele-Shaw problem in periodic spatiotemporal
   media},
   journal={Arch. Ration. Mech. Anal.},
   volume={217},
   date={2015},
   number={1},
   pages={155--230},
   issn={0003-9527},
   review={\MR{3338444}},
   doi={10.1007/s00205-014-0831-0},
}

\bib{Richardson}{article}{
title={Hele Shaw flows with a free boundary produced by the injection of fluid into a narrow channel}, volume={56}, DOI={10.1017/S0022112072002551}, number={4}, journal={Journal of Fluid Mechanics}, publisher={Cambridge University Press}, author={Richardson, S.}, year={1972}, pages={609--618}
}

\bib{R1}{book}{
   author={Rodrigues, Jos\'e-Francisco},
   title={Obstacle problems in mathematical physics},
   series={North-Holland Mathematics Studies},
   volume={134},
   note={Notas de Matem\'atica [Mathematical Notes], 114},
   publisher={North-Holland Publishing Co., Amsterdam},
   date={1987},
   pages={xvi+352},
   isbn={0-444-70187-7},
   review={\MR{880369}},
}

\bib{ST}{article}{
   author={Saffman, P. G.},
   author={Taylor, Geoffrey},
   title={The penetration of a fluid into a porous medium or Hele-Shaw cell
   containing a more viscous liquid},
   journal={Proc. Roy. Soc. London. Ser. A},
   volume={245},
   date={1958},
   pages={312--329. (2 plates)},
   issn={0962-8444},
   review={\MR{0097227}},
   doi={10.1098/rspa.1958.0085},
}

\bib{Schwab}{article}{
   author={Schwab, Russell W.},
   title={Stochastic homogenization of Hamilton-Jacobi equations in
   stationary ergodic spatio-temporal media},
   journal={Indiana Univ. Math. J.},
   volume={58},
   date={2009},
   number={2},
   pages={537--581},
   issn={0022-2518},
   review={\MR{2514380}},
   doi={10.1512/iumj.2009.58.3455},
}

\bib{Thomas}{book}{
   author={Thomas, J. W.},
   title={Numerical partial differential equations: Conservation laws and elliptic equations},
   series={Texts in Applied Mathematics},
   volume={33},
   publisher={Springer-Verlag, New York},
   date={1999},
   pages={xxii+556},
   isbn={0-387-98346-5},
   review={\MR{1685420}},
   doi={10.1007/978-1-4612-0569-2},
}

\end{biblist}

\end{document}